\numberwithin{equation}{section}
\newtheorem{theorem}{Theorem}[section]
\newtheorem{proposition}[subsection]{\bf Proposition}
\newtheorem{lemma}[subsection]{{\bf Lemma}}
\newtheorem{remark}[subsection]{Remark}
\newcommand{\al}{\alpha}
\newcommand{\Z}{\mbox{$\mathbb Z$}}
\newcommand{\Q}{\mbox{$\mathbb Q$}}
\newcommand{\e}{\varepsilon}
\begin{document}

\title[Sums of $S$-units in sum of terms of recurrence sequences]{Sums of $S$-units in sum of terms of recurrence sequences} 

\author[P. K. Bhoi]{P. K.  Bhoi}
\address{Pritam Kumar Bhoi, Department of Mathematics, National Institute of Technology Rourkela-769008, India}
\email{pritam.bhoi@gmail.com}

\author[S. S. Rout]{S. S. Rout}
\address{Sudhansu Sekhar Rout, Department of Mathematics, National Institute of Technology Calicut- 673 601, 
Kozhikode, India.}
\email{sudhansu@nitc.ac.in; lbs.sudhansu@gmail.com}

\author[G. K. Panda]{G. K. Panda}
\address{Gopal Krishna Panda, Department of Mathematics, National Institute of Technology Rourkela-769008, India}
\email{gkpanda\_nit@rediffmail.com}

\thanks{2010 Mathematics Subject Classification: Primary 11B37, Secondary 11D61, 11J86. \\
Keywords: Linear recurrence sequence,  sums of $S$-units, perfect power, Baker's method}
\maketitle
\pagenumbering{arabic}
\pagestyle{headings}

\begin{abstract}
Let $S := \{p_1,\ldots ,p_{\ell}\}$ be a finite set of primes and denote by $\mathcal{U}_S$ the set of all rational integers whose prime factors are all in $S$. Let $(U_n)_{n\geq 0}$ be a non-degenerate linear recurrence sequence with order at least two. In this paper, we provide a finiteness result for the solutions of the Diophantine equation $aU_n + bU_m = z_1 +\cdots +z_r,$ where $n\geq m$ and $z_1, \ldots, z_r\in \mathcal{U}_S$.  
\end{abstract}

\section{Introduction}

For a given positive integer  $k$, let $(U_{n})_{n \geq 0}$ be a linear recurrence sequence  of order $k$ defined by
\begin{equation}\label{eq4}
U_{n} = a_1U_{n-1} + \dots +a_kU_{n-k},
\end{equation}
where $a_1,\dots, a_k \in \Z$ with $a_k\neq 0$ and $U_0,\dots,U_{k-1}$ are integers not all zero. The characteristic polynomial of $U_n$ is given by
\begin{equation}\label{eq5}
f(x):= x^k - a_1x^{k-1}-\dots-a_k = \prod_{i =1}^{t}(x - \alpha_i)^{m_i}\in \Z[X],
\end{equation}
where $\alpha_1,\dots,\alpha_t$ are distinct algebraic numbers and $m_1,\dots, m_t$ are positive integers. It is well known that (see \cite[Theorem C1]{st}) for $n\geq 0$,
\begin{equation}\label{eq6}
U_n=f_1(n)\alpha_1^n +\cdots + f_t(n)\alpha_{t}^n.
\end{equation}
Here $f_1, \ldots, f_t$ are nonzero polynomials in $n$ with degrees less than $m_1, \ldots, m_t$  respectively with coefficients from $\Q(\alpha_1, \ldots, \alpha_t)$. The sequence $(U_n)_{n \geq 0}$ is called {\it simple} if $t=k$ and called {\it degenerate} if there are integers $i, j$ with $1\leq i< j\leq t$ such that $\alpha_i/\alpha_j$ is a root of unity; otherwise it is called {\it non-degenerate}. 

Let $\{p_1,\ldots,p_{\ell}\}$ be distinct primes and put $S = \{p_1,\ldots, p_\ell\}$. Then a rational integer $z$ is an $S$-unit if $z$ can be written as
\begin{equation}\label{eq0}
z = \pm p_{1}^{e_1}\cdots p_{\ell}^{e_{\ell}}
\end{equation}
where $e_1, \ldots, e_{\ell}$ are nonnegative integers and we denote the set of $S$-units by $\mathcal{U}_S$. 

The aim of this paper is to study the Diophantine equation 
\begin{equation}\label{eq8}
aU_n + bU_m = z_1 + \cdots + z_r
\end{equation} 
where $z_1, \ldots, z_r\in \mathcal{U}_S$ and $a$, $b$ are positive integers.  This will serve as an extension of the earlier result by B\'erczes et al. \cite[Theorem 2.2]{bhpr1}.  Several authors have investigated such Diophantine equations combining both $S$-units and recurrence sequences. In this context, we want to mention few references related to these problems (see \cite{bhpr1, bhpr2, sl}) and other references therein.  Further, Peth\H{o} and Tichy  \cite{pt} proved that if $p$ is a fixed prime, then there are only finitely many Fibonacci numbers of the form $p^a + p^b +p^c$ with integers $a>b>c\geq 0$. Bert\'ok et al.  \cite[Theorem 2.1]{bhpr2},  solved  equations of the form $U_n = 2^a + 3^b + 5^c$ completely, when $U_n$ is one of the Fibonacci, Lucas, Pell and associated Pell sequences, respectively.  Also, Hajdu et al. \cite{hs} and Erazo et al. \cite{egl} investigated the problem of $S$-units in the $x$ co-ordinate of solutions of Pell equations. For other related problems concerning $S$-units and recurrence sequences, one can go through (\cite{ev, egybook, egyst}). 

For a given recurrence sequence $(U_n)_{n\geq 0}$, several authors have also studied the problem of finding $(n, m, z)$ such that
\begin{equation}\label{eq2}
U_{n} + U_{m} = 2^{z}.
\end{equation}
In particular, Bravo and Luca studied the case when $U_{n}$ is the Fibonacci sequence \cite[Theorem 2]{BL2015} and the Lucas number sequence \cite[Theorem 2]{Bravo2014}. Recently, Pink and Ziegler \cite[Theorem 1]{Pink2016} have generalized the results of Bravo and Luca \cite{Bravo2014,BL2015}, and considered a more general Diophantine equation 
\begin{equation}\label{eq2a}
U_{n} + U_{m} = wp_{1}^{a_{1}} \cdots p_{s}^{a_{s}}
\end{equation}
in nonnegative integer unknowns $n, m, a_{1}, \ldots, a_{s}$, where $(U_{n})_{n \geq 0}$ is a binary non-degenerate recurrence sequence, $p_{1}, \ldots, p_{s}$ are distinct primes and $w$ is a nonzero integer with $p_{i} \nmid w$ for $1\leq i \leq s$. They proved that under certain assumptions, \eqref{eq2a} has finitely many solutions using lower bounds for linear forms of $p$-adic logarithms. Further, certain variations of the Diophantine equation \eqref{eq8} has also been studied (see e.g. \cite{mr2019, mr}) for binary recurrence sequences.

We are interested in linear recurrence sequences $(U_n)_{n=0}^{\infty}$ with $U_n$ defined as in \eqref{eq6} for which $f_1(n)$ is a nonzero constant (say $\eta_1$). Thus,
\begin{equation}\label{eq7}
U_n=\eta_1\alpha_1^n +f_2(n)\alpha_2^n+\cdots + f_t(n)\alpha_{t}^n.
\end{equation}
Further, we set 

\begin{equation}\label{eq707}
\gamma:=\max\left\{\max_{1 \leq i \leq k}|a_i|, \max_{0 \leq j \leq {k-1}}|U_j| \right\}.
\end{equation}

Now we are ready to state our main result, which extend an earlier result by B\'erczes et al. \cite[Theorem 2.2]{bhpr1}.
 \begin{theorem}\label{th1}
Let $k\geq 2$ be an integer and $a, b$ are positive integers. Let $(U_n)_{n\geq 0}$ be a non-degenerate linear recurrence sequence of order $k$ and let $U_n$ satisfy \eqref{eq7}. Furthermore, assume that the dominant root $\alpha_1$ is not an integer larger than one and the following condition does not hold
\begin{equation} \label{eq:assumption1}
	|U_n|\geq  |aU_n+bU_m|.
\end{equation}
Let $\e>0$ be arbitrary. Then for all $r\geq 1$ and for all solutions $(n, m, z_1,\dots, z_r)$ of \eqref{eq8}
satisfying $|z_i|^{1+\e}<|z_r|$ for $i=1,\dots,r-1$, we have
$$
\max(n,m, |z_1|,\dots,|z_r|)\leq C,
$$
where $C$ is an effectively computable constant depending only on $\e$, $\gamma$, $k$, $\ell$, $p_1,\dots,p_{\ell}$, $r$, $a$, $b$.
\end{theorem}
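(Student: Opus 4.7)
The plan is to reduce Theorem~\ref{th1} to an effective bound on $n$ via size estimates, and then derive such a bound from Baker--Matveev type linear forms in logarithms. First, I would use the Binet-type expansion \eqref{eq7} together with the assumption that $\alpha_1$ is the dominant root and $\eta_1\neq 0$ to deduce the two-sided bound $c_1|\alpha_1|^n\leq |U_n|\leq c_2|\alpha_1|^n$ for effective $c_1,c_2$. The failure of \eqref{eq:assumption1} forces $|aU_n+bU_m|>|U_n|$, so $|aU_n+bU_m|$ is also of exact order $|\alpha_1|^n$. On the other hand, the hypothesis $|z_i|^{1+\e}<|z_r|$ for $i<r$ gives $|z_1+\cdots+z_r|=|z_r|(1+o(1))$ as $|z_r|\to\infty$, so that $\log|z_r|$ and $n\log|\alpha_1|$ are comparable up to constants depending only on the input parameters. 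This reduces the theorem to showing that $n$ is effectively bounded.

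Next, I would build a linear form in logarithms from the dominant balance. Rearranging \eqref{eq8} as
\[
a\eta_1\alpha_1^n-z_r \;=\; (z_1+\cdots+z_{r-1})-bU_m-a\sum_{j=2}^{t}f_j(n)\alpha_j^n,
\]
the right-hand side is majorised by $c\bigl(|z_r|^{1/(1+\e)}+|\alpha_1|^m+n^{K}|\alpha_2|^n\bigr)$ for appropriate constants $c$ and $K$, where $\alpha_2$ is a root of second-largest modulus. Dividing by $a\eta_1\alpha_1^n$ and invoking $|z_r|\asymp|\alpha_1|^n$, one obtains
\[
\Lambda:=\Bigl|1-\frac{z_r}{a\eta_1\alpha_1^n}\Bigr| \;\leq\; c\bigl(|\alpha_1|^{-n\e/(1+\e)}+|\alpha_1|^{m-n}+n^K(|\alpha_2|/|\alpha_1|)^n\bigr),
\]
whose right-hand side decays exponentially in $n$ provided $n-m$ also grows. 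Writing $z_r=\pm p_1^{e_1}\cdots p_\ell^{e_\ell}$, I would apply Matveev's theorem to bound $\Lambda$ from below by $\exp\bigl(-C\log n\,(\log|z_r|+1)\bigr)$, with $C$ depending only on $k,\ell,a,b,\gamma,p_1,\ldots,p_\ell,\e$. Comparing the two estimates and using $\log|z_r|\asymp n$ then forces $n\leq C'\log^2 n$, hence $n$ is bounded.

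The complementary case, in which $n-m$ stays bounded, requires only a minor adjustment: I would group the left-hand side as $\eta_1\alpha_1^m(a\alpha_1^{n-m}+b)$ plus subdominant terms and carry out the same Matveev argument with this new leading coefficient, treating each of the finitely many values of $n-m$ separately. The main technical obstacle throughout is verifying that each form $\Lambda$ is genuinely nonzero: this is precisely where the hypothesis that $\alpha_1$ is not an integer greater than one is essential, ruling out spurious multiplicative coincidences between $\alpha_1^n$ and $S$-units, while the failure of \eqref{eq:assumption1} prevents pathological cancellation on the left-hand side of \eqref{eq8}. Once $n$ (and hence $m\leq n$) is bounded, the estimate $|z_r|\leq c|\alpha_1|^n$ bounds $|z_r|$, and the inequality $|z_i|^{1+\e}<|z_r|$ bounds the remaining $S$-units, completing the proof.
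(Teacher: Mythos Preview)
Your overall architecture---two linear forms in logarithms, the first isolating $a\eta_1\alpha_1^n$ and the second absorbing the factor $a\alpha_1^{n-m}+b$---is exactly the paper's strategy, and your remarks on size reduction and on the role of the hypotheses $\alpha_1\notin\mathbb{Z}$ and the failure of \eqref{eq:assumption1} are correct. However, the logic linking the two steps is mis-stated in a way that leaves a genuine gap.

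The first linear form does \emph{not} bound $n$. In your upper estimate
\[
\Lambda \;\leq\; c\bigl(|\alpha_1|^{-n\e/(1+\e)}+|\alpha_1|^{m-n}+n^{K}(|\alpha_2|/|\alpha_1|)^{n}\bigr),
\]
the middle term $|\alpha_1|^{m-n}$ need not decay with $n$ at all. The correct Matveev lower bound here is $\exp(-C\log n)$ (your extra factor $\log|z_r|$ is spurious: the exponents $e_{ri}$ enter only through $B$, and $\log B=O(\log n)$ since $\log|z_r|\asymp n$). Comparing, you obtain only $n-m\leq C'\log n$, which is the content of the paper's Proposition~\ref{prop1}, not $n\leq C'\log^2 n$.

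Consequently the ``complementary case'' is not ``$n-m$ bounded, finitely many values to treat separately'' but rather $n-m\leq C'\log n$, so the height of $a+b\alpha_1^{m-n}$ is $O(\log n)$, not $O(1)$. The second Matveev application must carry this extra $\log n$ in the product of heights, yielding $|\Lambda_2|\geq\exp(-C''(\log n)^2)$; only then does comparison with the now genuinely exponential upper bound (the $|\alpha_1|^{m-n}$ term having been absorbed) give $n\ll(\log n)^2$ and hence $n$ bounded. The paper organises this by first proving $n-m\leq C\log n$ as a separate proposition and then feeding that into the second form; your write-up should do the same rather than treat the two forms as covering disjoint cases.
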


\begin{remark}
Suppose $U_n = 2^n -1$ and $S= \{2\}$. Then 
\[U_n + U_m = 2^{u} + 2^{v} - 2\] has infinitely many solutions given by $ n = u, m = v$. This shows that the assumption that the dominant root of $U_n$ is not an integer is necessary.
\end{remark}

 \section{Auxiliary results}

To achieve the aforesaid goal we need the following results.

\begin{lemma} [\cite{bhpr1}]\label{lem4}
Let $(U_n)_{n\geq 0}$ be a non-degenerate linear recurrence sequence of order $k \geq 2$.
\begin{enumerate}
\item[(i)] With the notation in \eqref{eq6}, write $$f_i(n)=\beta_{i,0}+\beta_{i,1}n+ \dots + \beta_{i,m_i-1}n^{m_i-1}\ (i=1,\dots,t).$$ Then we have
$$
\max\limits_{1\leq i \leq t,\ 0\leq \ell \leq m_i-1}\left\{h(\alpha_i),h(\beta_{i,\ell})\right\}\leq c_1.
$$
Here $c_1$ is an effectively computable constant depending only on $\gamma$ and $k$.

\vskip.2cm

\item[(ii)] If $n\geq 1$ and $f_i(n)\neq 0$ then
$$
c_2\leq |f_i(n)| \leq c_3n^{m_i-1}\ \ \ (1 \leq i \leq t),
$$
where $c_2$ and $c_3$ are effectively computable constants depending only on $\gamma$ and $k$.

\vskip.2cm

\item[(iii)] Suppose that $\alpha_1$ is a dominant root of the sequence $(U_n)_{n\geq 0}$. Then we have
    $$
    |U_n|\leq c_4n^{k-1}|\alpha_1|^n\ \ \ (n\geq 1),
    $$
where $c_4$ is an effectively computable constant depending only on $\gamma$ and $k$.
\end{enumerate}
\end{lemma}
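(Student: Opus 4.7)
The plan is to prove (i), (ii), (iii) in order, letting the height bounds of (i) feed the absolute-value estimates of (ii), and then (ii) plug into (iii). For (i), I would first bound $h(\alpha_i)$ directly from the characteristic polynomial: the monic polynomial $f(x)\in\Z[x]$ in \eqref{eq5} has $|a_j|\leq\gamma$, so its Mahler measure satisfies $M(f)\leq\|f\|_2\leq\gamma\sqrt{k+1}$, giving $h(\alpha_i)\leq\log(\gamma\sqrt{k+1})$ uniformly in $i$. For the coefficients $\beta_{i,\ell}$, I would set up the $k\times k$ linear system
\[
U_j \;=\; \sum_{i=1}^{t}\sum_{\ell=0}^{m_i-1}\beta_{i,\ell}\,j^{\ell}\alpha_i^{j},\qquad j=0,1,\ldots,k-1,
\]
whose coefficient matrix is a confluent Vandermonde matrix in $\alpha_1,\ldots,\alpha_t$. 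Its determinant $\Delta$ equals (up to a rational factor involving factorials) $\prod_{i<i'}(\alpha_i-\alpha_{i'})^{m_im_{i'}}$ and is nonzero because the $\alpha_i$ are distinct. Cramer's rule then presents each $\beta_{i,\ell}$ as a ratio of two determinants; the numerator has height bounded by an explicit function of $h(\alpha_i)$ and $h(U_j)\leq\log\gamma$, while $|\Delta|$ is bounded below by a Liouville-type estimate applied to the nonzero algebraic number $\Delta$ whose height is already controlled by the bound on $h(\alpha_i)$. Combining these produces an effective bound $h(\beta_{i,\ell})\leq c_1$.

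For (ii), the upper bound is routine: bounded height and bounded degree force $|\beta_{i,\ell}|$ to be bounded above, so the triangle inequality yields $|f_i(n)|\leq m_i\bigl(\max_{\ell}|\beta_{i,\ell}|\bigr)n^{m_i-1}\leq c_3\,n^{m_i-1}$. The lower bound $|f_i(n)|\geq c_2$ is subtler. Let $d_i\leq m_i-1$ be the true degree of $f_i$, so $\beta_{i,d_i}\neq 0$; Liouville applied to this nonzero algebraic number of bounded degree and bounded height gives $|\beta_{i,d_i}|\geq c_0>0$ with $c_0$ depending only on $\gamma,k$. Hence for $n\geq n_0(\gamma,k)$ the leading term dominates and $|f_i(n)|\geq c_0\,n^{d_i}/2\geq c_0/2$. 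For the finitely many $n$ with $1\leq n<n_0$, any nonzero value $f_i(n)$ lies in the number field $\Q(\alpha_1,\ldots,\alpha_t)$ (whose degree is bounded by $\gamma,k$) and satisfies $h(f_i(n))\leq c_1+(k-1)\log n_0$, so a second Liouville bound gives $|f_i(n)|\geq c'>0$. Taking $c_2=\min(c_0/2,c')$ completes (ii).

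Part (iii) is then a one-line consequence: applying (ii) to \eqref{eq6} and using $|\alpha_i|\leq|\alpha_1|$ for a dominant root,
\[
|U_n|\;\leq\;\sum_{i=1}^{t}|f_i(n)|\,|\alpha_i|^n\;\leq\;t\,c_3\,n^{k-1}|\alpha_1|^n\;=:\;c_4\,n^{k-1}|\alpha_1|^n.
\]
I expect the main obstacle to lie in step (i), specifically in turning the controlled height of the confluent Vandermonde determinant $\Delta$ into an effective absolute-value lower bound depending only on $\gamma,k$. This is where a Liouville inequality (or, equivalently, an analysis of the discriminant of the squarefree part of $f$) is really needed, and it is the single place where the explicit structure of the Vandermonde expression is exploited. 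Once this lower bound for $|\Delta|$ is secured, (ii) and (iii) reduce to the packaging steps above.
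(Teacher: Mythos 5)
This lemma is stated without proof in the present paper (it is imported from \cite{bhpr1}), and your reconstruction follows the standard argument used there and is correct: $h(\alpha_i)$ from the Mahler measure of the characteristic polynomial, $h(\beta_{i,\ell})$ from the confluent Vandermonde system for the initial terms, Liouville-type lower bounds for the nonzero coefficients in (ii), and the triangle inequality for (iii). The one slip is in (i): combining a height bound on the numerator determinant with an absolute-value lower bound for $\Delta$ would control $|\beta_{i,\ell}|$ rather than $h(\beta_{i,\ell})$; what you actually need is $h(\beta_{i,\ell})=h(N/\Delta)\leq h(N)+h(\Delta)$, and since both determinants are polynomial expressions of bounded degree in the $\alpha_i$ and the $U_j$, both heights are already bounded by your estimates, so no Liouville step is required in (i) (it is genuinely needed only for the lower bound in (ii)).
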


Let $\psi$ be an algebraic number of degree $d$ with minimal polynomial 
\[c_{0}x^d + c_1x^{d-1} + \cdots + c_d = c_0 \prod_{i=1}^{d}\left(X - \psi^{(i)}\right),\]
where $c_0$ is the leading coefficient of the minimal polynomial of $\psi$ over $\mathbb{Z}$ and the
$\psi^{(i)}$'s are conjugates of $\psi$ in $\mathbb{C}$.  The absolute {\it logarithmic height} of an algebraic number $\psi$ as 
\[
h(\psi) = \frac{1}{d} \left( \log |c_0| + \sum_{i=1}^d \log \max ( 1, |\psi^{(i)}| ) \right). 
\]
In particular, if $\psi = p/q$ is a rational number with $\gcd(p, q) = 1$ and $q >0$, then $h(\psi) = \log \max\{|p|, q\}$.

\begin{lemma}[Matveev \cite{Mat2000}]
\label{lem:matveev}
Denote by $\psi_1,\ldots,\psi_m$ algebraic numbers, not $0$ or $1$, by $\log\psi_1,\ldots,\log\psi_m$ the principal values of their logarithms, by $D$ the degree of the number field $\mathbb{K} = \Q(\psi_1,\ldots,\psi_m)$ over $\Q$, and by $b_1,\ldots,b_m$ rational integers. Define
$$
B=\max\{|b_1|,\ldots,|b_m|\},
$$
and
$$
A_i= \max\{D h(\psi_i),|\log\psi_i|, 0.16\} \quad (1\le i\le m),
$$
where $h(\psi)$ denotes the absolute logarithmic height of $\psi$. Consider the linear form
$$
\Lambda=b_1\log\psi_1+\dots+b_m\log\psi_m
$$
and assume that $\Lambda \ne 0$. Then
\[\log |\Lambda|\geq -C(m,\varkappa) D^2 A_1\cdots A_m \log (eD) \log (eB),\]
where $\varkappa=1$ if $\mathbb K \subset \mathbb{R}$ and $\varkappa =2$ otherwise and
\[C(m,\varkappa)=\min \left\{ \frac 1{\varkappa} \left( \frac 12 em \right) ^{\varkappa} 30^{m+3} m^{3.5},
2^{6m +20} \right\}. \]
\end{lemma}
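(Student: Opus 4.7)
The plan is to follow Matveev's own transcendence-theoretic strategy from his 2000 paper, a classical application of the auxiliary-function method combined with his distinctive use of Kummer descent to save a logarithmic factor. Proceed by contradiction: assume $|\Lambda|$ is strictly smaller than the claimed lower bound, and derive an arithmetic contradiction. Working inside the number field $\mathbb{K}=\mathbb{Q}(\psi_1,\ldots,\psi_m)$ of degree $D$, introduce optimization parameters $L_0,L_1,\ldots,L_m$ (degree bounds on the interpolated variables), $T$ (multiplicity of vanishing), and $S$ (number of interpolation nodes). Their precise values depend on $D$, the $A_j$, $B$, and $\varkappa$ and are chosen at the very end so as to balance the key inequalities.

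The first step is the construction of an auxiliary function
$$\Phi(z)=\sum_{\lambda_0=0}^{L_0}\sum_{\lambda_1=0}^{L_1}\cdots\sum_{\lambda_m=0}^{L_m} p(\lambda_0,\ldots,\lambda_m)\, z^{\lambda_0}\prod_{j=1}^{m}\psi_j^{\lambda_j z}$$
with unknown integer coefficients $p(\cdot)$. A refined Siegel--Bombieri--Vaaler lemma, applied to the linear system requiring that $\Phi^{(\tau)}(s)=0$ for $0\le \tau<T$ and $0\le s<S$, produces nontrivial coefficients of controlled height. The required archimedean and non-archimedean size bounds for the numbers $\psi_j^{\lambda_j s}$ are furnished precisely by the hypotheses $A_j\ge D\, h(\psi_j)$ and $A_j\ge |\log \psi_j|$ in the lemma.

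The crucial extrapolation step exploits the assumed smallness of $\Lambda$. Because $\psi_1^{b_1}\cdots\psi_m^{b_m}$ is exponentially close to $1$, shifting $z$ by the integer vector $(b_1,\ldots,b_m)$ in the exponents alters $\Phi$ by a quantity driven by $|\Lambda|$. A Schwarz-lemma / maximum modulus argument, iterated, extends the zero set of $\Phi$ to many additional points; the dichotomy between $\mathbb{K}\subset\mathbb{R}$ and the complex case is exactly where the factor $\varkappa$ enters through the sharper Schwarz lemma available on the real line. Matveev's characteristic innovation is to introduce, via Kummer descent, an auxiliary primitive $N$th root $\zeta$ and to work with $\Phi$ twisted by roots of the $\psi_j$; this twist is what allows the extrapolation to proceed with only a single $\log B$ loss rather than the $(\log B)^2$ of the earlier Baker--W\"ustholz bounds.

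Once the vanishing set of $\Phi$ exceeds what its degree and prescribed multiplicity allow, a zero estimate (Philippon's multiplicity theorem on commutative group varieties, or Matveev's own group-theoretic refinement that tracks algebraic subgroups of $\mathbb{G}_a\times\mathbb{G}_m^m$) forces $\Phi\equiv 0$, contradicting the nontriviality produced by Siegel's lemma. Optimizing the free parameters $L_j,T,S$ against the height data then yields an inequality of the exact shape $\log|\Lambda|\ge -C(m,\varkappa) D^2 A_1\cdots A_m\log(eD)\log(eB)$, with the combinatorial constant $\varkappa^{-1}(em/2)^{\varkappa}\cdot 30^{m+3} m^{3.5}$ emerging from the interplay between the Schwarz step and the Siegel step. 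The main obstacle is the zero estimate with the sharp polynomial dependence on $m$ and the $A_j$ together with the correct handling of the Kummer tower; this is where the bulk of Matveev's paper lives and where the saved $\log B$ factor over Baker--W\"ustholz is concentrated. Since the lemma is quoted verbatim from \cite{Mat2000}, we content ourselves with this sketch and refer the reader there for the full execution.
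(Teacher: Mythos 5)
The paper offers no proof of this lemma: it is quoted directly from Matveev's paper \cite{Mat2000}, which is exactly where your sketch also ends up deferring, so the two treatments coincide in substance. Your outline of the auxiliary-function, Siegel-lemma, extrapolation, zero-estimate and Kummer-descent machinery is a fair description of Matveev's published argument (one small historical quibble: Baker--W\"ustholz already achieved a single $\log B$; Matveev's gain lies chiefly in the dependence of the constant on $m$), and it is entirely appropriate to cite the source rather than reprove the theorem here.
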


\begin{lemma}\label{ubound}
Suppose that $(U_n)_{n\geq 0}$ has a dominant root $\alpha_1$. Let $(n, m, z_1,\dots, z_r)$ be a solution of \eqref{eq8} such that $n\geq m$ and
\begin{equation}
\label{ujeq}
|z_i|^{1+\e}\leq |z_r|, \,\,1\leq i \leq r-1
\end{equation}
with some $\e>0$. Then $\log |z_r|<c_7n+c_8$, where $c_6$ and $c_7$ are effectively computable constants depending only on $\e$, $\gamma$, $r$, $k$, $a$ and $b$.
\end{lemma}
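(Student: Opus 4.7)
The plan is to bound $|z_r|$ directly from equation \eqref{eq8} by pairing an upper bound for $|aU_n + bU_m|$ with a lower bound for $|z_1 + \cdots + z_r|$ in terms of $|z_r|$; no linear forms in logarithms are needed for this preliminary estimate.

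First I would upper-bound the left-hand side of \eqref{eq8}. Since $\alpha_1$ is dominant and $n \geq m$, Lemma \ref{lem4}(iii) gives
\[
|aU_n + bU_m| \;\leq\; a c_4 n^{k-1}|\alpha_1|^n + b c_4 m^{k-1}|\alpha_1|^m \;\leq\; (a+b)c_4\, n^{k-1}|\alpha_1|^n,
\]
after noting that $|\alpha_1|\geq 1$ (otherwise $U_n$ would tend to $0$, forcing the sequence to be eventually zero and contradicting non-degeneracy) and absorbing the finitely many initial-segment contributions into an effectively computable constant depending only on $\gamma$ and $k$. Moreover, $|\alpha_1|$ itself is bounded in terms of $\gamma$ and $k$ via Lemma \ref{lem4}(i), since $h(\alpha_1)\leq c_1$.

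Second I would lower-bound $|z_1+\cdots +z_r|$. The hypothesis $|z_i|^{1+\e}\leq |z_r|$ for $i<r$ yields $|z_i|\leq |z_r|^{1/(1+\e)}$, so the triangle inequality gives
\[
|z_1+\cdots +z_r| \;\geq\; |z_r| - (r-1)|z_r|^{1/(1+\e)} \;=\; |z_r|\bigl(1 - (r-1)|z_r|^{-\e/(1+\e)}\bigr).
\]
If $|z_r| \leq (2(r-1))^{(1+\e)/\e}$ then $\log |z_r|$ is bounded by a constant depending only on $\e$ and $r$, and the conclusion of the lemma is immediate. Otherwise the factor in parentheses is at least $1/2$, so $|z_1+\cdots+z_r| \geq |z_r|/2$.

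Combining the two estimates via \eqref{eq8} gives $\tfrac12 |z_r| \leq (a+b)c_4\, n^{k-1}|\alpha_1|^n$, and taking logarithms yields
\[
\log |z_r| \;\leq\; \log\bigl(2(a+b)c_4\bigr) + (k-1)\log n + n \log|\alpha_1|.
\]
Using the trivial estimate $(k-1)\log n \leq (k-1)n$, this is of the desired form $\log|z_r| \leq c_7 n + c_8$ with $c_7 = \log|\alpha_1| + (k-1)$ and $c_8$ an effectively computable constant depending only on $\e$, $\gamma$, $r$, $k$, $a$, and $b$. There is no substantive obstacle; the only care needed is in the bookkeeping to ensure every constant is effectively computable with the stated dependence, and in handling the low-$m$ (or $m=0$) case by separating out the contributions of the initial values $U_0,\ldots,U_{k-1}$, each of which is bounded by $\gamma$.
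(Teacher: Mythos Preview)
Your proof is correct and follows essentially the same approach as the paper: both bound $|aU_n|+|bU_m|$ via Lemma~\ref{lem4}(iii), use the hypothesis $|z_i|\le |z_r|^{1/(1+\e)}$ to control the smaller $z_i$, dispose of the small-$|z_r|$ range by a direct constant bound, and then take logarithms. The only differences are cosmetic (you write the comparison as a lower bound on $|z_1+\cdots+z_r|$ whereas the paper rearranges it as an upper bound on $|z_r|$), and your side remarks about $|\alpha_1|\ge 1$ and the low-$m$ case are extra caution the paper simply absorbs into the constants.
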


\begin{proof}
Note that if $|z_r|^{\frac{\e}{1+\e}}\leq r$, then $\log |z_r|< c_6$, where $c_6$ is an effectively computable constant depending on $\e, r$. Now we may assume that $|z_r|^{\frac{\e}{1+\e}}>r$. 
In view of \eqref{eq8} and \eqref{ujeq}, we have 
\[|z_r|\leq |aU_n| +|bU_m| + \sum_{i=1}^{r-1}|z_i| \leq  |aU_n| +|bU_m|+(r-1)|z_r|^{\frac{1}{1+\e}}.\]
Since $\alpha_1$ is dominant, using Lemma \ref{lem4}(iii), we deduce
\begin{equation}\label{eq708}
|z_r|^{\frac{1}{1+\e}}\left(|z_r|^{\frac{\e}{1+\e}}- (r-1)\right)\leq c_5 n^{k-1}|\alpha_1|^n.
\end{equation}
Since $|z_r|^{\frac{\e}{1+\e}}>r$, then from \eqref{eq708}, we have
\[\log |z_r| < c_7 n +c_8.\]
\end{proof}

\begin{proposition}\label{prop1}
Let $(U_n)_{n\geq 0}$ be a non-degenerate linear recurrence sequence of order $k \geq 2$ and let $U_n$ satisfy \eqref{eq7}. Furthermore, assume that the dominant root $\alpha_1$ is not an integer larger than one and 
\begin{equation} \label{eq:assumption}
 |aU_n+bU_m| > \frac{1}{2}|\eta_1|\alpha_1^n.
\end{equation} Let $\e>0$ be arbitrary and $n\geq m$. Then for all $r\geq 1$ and for all solutions $(n, m, z_1,\dots, z_r)$ of \eqref{eq8}
satisfying $|z_i|^{1+\e}<|z_r|$ for $i=1,\dots,r-1$, we have
$$
n-m \leq C \log n,
$$
where $C$ is an effectively computable constant depending only on $\e$, $\gamma$, $k$, $\ell$, $p_1,\dots,p_{\ell}$, $r$, $a$, $b$.
\end{proposition}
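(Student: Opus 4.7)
The strategy is to derive an exponentially small linear form in logarithms of algebraic numbers from \eqref{eq8} and invoke Lemma \ref{lem:matveev}. Writing $U_n = \eta_1\alpha_1^n + E_n$ with $E_n := \sum_{i=2}^{t} f_i(n)\alpha_i^n$, Lemma \ref{lem4}(ii)--(iii) gives $|aE_n| \le C_0 n^{k-1}|\alpha_2|^n$ with $|\alpha_2|<|\alpha_1|$, and $|bU_m| \le C_0 n^{k-1}|\alpha_1|^m$. I would rearrange \eqref{eq8} to isolate the leading terms:
\[
a\eta_1\alpha_1^n - z_r \;=\; -bU_m - aE_n + \sum_{i=1}^{r-1}z_i.
\]
Hypothesis \eqref{eq:assumption} combined with the weight condition $|z_i|^{1+\e}<|z_r|$ and Lemma \ref{ubound} pins $|z_r|$ to order $|\alpha_1|^n$ up to polynomial factors, so $\sum_{i<r}|z_i| \le (r-1)|z_r|^{1/(1+\e)} \le C_1 |\alpha_1|^{n/(1+\e)} n^{(k-1)/(1+\e)}$.

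Dividing by $a\eta_1\alpha_1^n$ then yields
\[
\left| \frac{z_r}{a\eta_1\alpha_1^n} - 1 \right| \;\le\; C_2 |\alpha_1|^{m-n} + C_3 \rho^n n^{k-1},
\]
where $\rho := \max(|\alpha_2|/|\alpha_1|,\, |\alpha_1|^{-\e/(1+\e)}) < 1$. If this upper bound is at least $1/2$, then either $|\alpha_1|^{m-n}$ or $\rho^n n^{k-1}$ is bounded below by a positive constant, which forces either $n-m$ or $n$ itself to be bounded, so the claim is immediate. Otherwise, setting $\Lambda := \log\bigl(z_r/(a\eta_1\alpha_1^n)\bigr)$ via the principal branch gives $|\Lambda| \le 2|e^{\Lambda} - 1|$. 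Writing $z_r = \epsilon\, p_1^{e_1}\cdots p_\ell^{e_\ell}$ with $\epsilon = \pm 1$, the quantity $\Lambda$ is an integer linear combination of $\log p_1,\dots,\log p_\ell$, $\log(a\eta_1)$, $\log\alpha_1$, $\log(-1)$. Lemma \ref{lem4}(i) bounds all the relevant absolute logarithmic heights effectively in $\gamma$ and $k$, while the maximum coefficient satisfies $B = O(n)$ because $\log|z_r| \le C_4\, n$ by Lemma \ref{ubound}.

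Assuming $\Lambda \neq 0$, Lemma \ref{lem:matveev} yields $\log|\Lambda| \ge -C_5 \log n$, with $C_5$ depending only on the parameters permitted in the statement. Comparing with the upper bound bifurcates into two subcases: if $|\alpha_1|^{m-n}$ dominates, then $(n-m)\log|\alpha_1| \le C_5\log n + O(1)$, yielding $n-m = O(\log n)$; if instead $\rho^n n^{k-1}$ dominates, then $n\log(1/\rho) \le (C_5 + O(1))\log n$, which bounds $n$ absolutely and gives $n-m \le n \le C\log n$ trivially. If $\Lambda = 0$, then $z_r = a\eta_1\alpha_1^n$ exactly; substituting back into \eqref{eq8} produces a strictly smaller equation $aE_n + bU_m = z_1 + \cdots + z_{r-1}$ whose first summand has dominant root $\alpha_2$ satisfying $|\alpha_2|<|\alpha_1|$, so the same argument applies inductively on $r$.

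The principal obstacle is the careful bookkeeping of heights, coefficient sizes, and the auxiliary $\log(-1)$ contribution (which arises when $\alpha_1<0$ or $z_r<0$) through Matveev's bound, so that $C_5$ depends only on $\e,\gamma,k,\ell,p_1,\dots,p_\ell,r,a,b$. The hypothesis that $\alpha_1$ is not a positive integer $\ge 2$ is precisely what prevents $\alpha_1^n$ from being an $S$-unit in any systematic way and so rules out the degenerate family illustrated in the remark, where the linear form would vanish identically along an infinite sequence of solutions.
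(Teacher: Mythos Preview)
Your overall strategy—rewrite \eqref{eq8} so that $a\eta_1\alpha_1^n$ faces $z_r$, bound the remaining pieces, and feed the resulting linear form into Lemma~\ref{lem:matveev}—is exactly what the paper does, and your upper bounds and the comparison with Matveev's lower bound are essentially the same (up to whether one divides by $z_r$ or by $a\eta_1\alpha_1^n$, and whether the error is packaged with exponent $n-m$ or split into $|\alpha_1|^{m-n}$ and $\rho^n$).

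The genuine gap is your treatment of the degenerate case $\Lambda=0$. Your proposed induction on $r$ does not go through. First, the new left-hand side $aE_n+bU_m$ is not of the shape covered by the Proposition: $E_n=\sum_{i\ge 2}f_i(n)\alpha_i^n$ need not have a constant leading coefficient, it need not have a single dominant root, and in any case $bU_m$ (with dominant term $b\eta_1\alpha_1^m$) may well dwarf $E_n$, so ``first summand has dominant root $\alpha_2$'' is not justified. Second, the weight hypothesis $|z_i|^{1+\e}<|z_{r-1}|$ for $i<r-1$ is not available, so you cannot re-run the argument with $z_{r-1}$ playing the role of the largest $S$-unit. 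Third, nothing in your induction actually outputs a bound on $n-m$.

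This is precisely the place where the hypothesis $\alpha_1\notin\mathbb{Z}$ must be \emph{used}, not merely invoked heuristically. Since $\alpha_1$ is an algebraic integer, $\alpha_1\notin\mathbb{Z}$ forces $\alpha_1\notin\mathbb{Q}$, so there is a Galois automorphism $\sigma$ of $\mathbb{Q}(\alpha_1,\dots,\alpha_t)$ with $\sigma(\alpha_1)=\alpha_j\neq\alpha_1$, hence $|\alpha_j|<\alpha_1$. Applying $\sigma$ to the identity $a\eta_1\alpha_1^n=z_r\in\mathbb{Z}$ gives $a\,\sigma(\eta_1)\alpha_j^n=z_r$, whence $(\alpha_1/|\alpha_j|)^n=|\sigma(\eta_1)/\eta_1|$ is bounded by Lemma~\ref{lem4}, and so $n$ itself is bounded. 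This is the argument the paper uses, and it disposes of $\Lambda=0$ in one step without any induction.
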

\begin{proof}
 Let $c_{9},  \ldots, c_{18}$ be positive numbers which are effectively computable  in terms of $\e, \gamma, k, \ell, p_1,\dots, p_\ell, r, a, b$. Using \eqref{eq7}, we rewrite \eqref{eq8} as
$$
a\eta_1\alpha_1^{n} - z_r  = \sum_{i=1}^{r-1}{z_i} - a\sum_{j=2}^{t}f_j(n)\alpha_j^n - b\eta_1\alpha_1^m -b\sum_{j=2}^{t}f_j(m)\alpha_j^m.
$$
Dividing both sides of the above equation by $z_r$, we get
\[a\eta_1\alpha_1^{n}z_r^{-1}-1= \frac{\sum_{i=1}^{r-1}{z_i} }{z_r} -  \frac{ a\sum_{j=2}^{t}f_j(n)\alpha_j^n}{z_r} -  \frac{b\eta_1\alpha_1^m+b\sum_{j=2}^{t}f_j(m)\alpha_j^m }{z_r},\]
and putting $\Phi_1:=a\eta_1\alpha_1^{n}z_r^{-1}$, we obtain
\begin{equation}\label{eq:thm-9}
\left|\Phi_1 -1 \right| \leq \frac{\sum_{i=1}^{r-1}{|z_i|}}{|z_r|}+\frac{a\sum_{j=2}^{t}|f_j(n)||\alpha_j|^n}{|z_r|} + \frac{b|\eta_1|\alpha_1^m+b\sum_{j=2}^{t}|f_j(m)||\alpha_j|^m}{|z_r|}.
\end{equation}
Firstly, we derive an upper bound for $\frac{b|\eta_1|\alpha_1^m+b\sum_{j=2}^{t}|f_j(m)||\alpha_j|^m}{|z_r|}$. Since, by assumption, for every $1 \leq i \leq r-1$ we have $|z_i| \leq |z_r|^{\frac{1}{1+\e}}$, from
\eqref{eq8} and \eqref{eq:assumption} we get
\begin{equation}\label{eq:thm-14}
|z_r|>\frac{1}{2r}|\eta_1|\alpha_1^n.
\end{equation}
Using Eq.\eqref{eq:thm-14} and Lemma \ref{lem4}(ii), we may write 
\begin{equation}\label{eq:thm-17}
\frac{b|\eta_1|\alpha_1^m+b\sum_{j=2}^{t}|f_j(m)||\alpha_j|^m}{|z_r|}<\frac{2rtbc_3m^{k-1}\alpha_1^m}{|\eta_1|\alpha_1^n}<c_{9}n^{k-1}\frac{1}{\alpha_1^{n-m}}.
\end{equation}
Similarly, to derive an upper bound for $\frac{a\sum_{j=2}^{t}|f_j(n)||\alpha_j|^n}{|z_r|}$, we may assume that $|\alpha_2| \geq \dots \geq |\alpha_t|$
\begin{equation}\label{eq:thm-17a}
\frac{a\sum_{j=2}^{t}|f_j(n)||\alpha_j|^n}{|z_r|}<\frac{2rtac_3n^{k-1}|\alpha_2|^n}{|\eta_1|\alpha_1^n}<c_{10}n^{k-1}\left(\frac{|\alpha_2|}{\alpha_1}
\right)^{n-m}.
\end{equation}
Finally using \eqref{eq:thm-14}, we get
\begin{equation}\label{eq:thm-15}
\frac{\sum_{i=1}^{r-1}{|z_i|}}{|z_r|} \leq \frac{r-1}{|z_r|^{\frac{\e}{1+\e}}}<\frac{(r-1)(2r)^{\frac{\e}{1+\e}}}{|\eta_1|^{\frac{\e}{1+\e}}\alpha_1^{\frac{\e}{1+\e}n}}\leq \frac{c_{11}}{\alpha_1^{\frac{\e}{1+\e}n}}\leq \frac{c_{11}}{\alpha_1^{\frac{\e}{1+\e}(n-m)}}.
\end{equation}
Thus \eqref{eq:thm-17}, \eqref{eq:thm-17a}, and \eqref{eq:thm-15} altogether imply
\begin{equation}\label{eq:Phi-1_upper}
|\Phi_1-1|<c_{12}n^{k-1}\left\{\max\left(\frac{1}{\alpha_1}, \frac{1}{\alpha_1^{\frac{\e}{1+\e}}},\frac{|\alpha_2|}{\alpha_1}
\right)\right\}^{n-m}.
\end{equation}

Now, we distinguish two cases according as $|\Phi_1-1| > \frac{1}{2}$ or $0 \leq |\Phi_1-1| \leq \frac{1}{2}$, respectively.

If $|\Phi_1-1| > \frac{1}{2}$, then using the fact that $\alpha_1$ is dominant in \eqref{eq:Phi-1_upper}, we obtain $n-m \leq c_{13}$.

So, we may assume that $0 \leq |\Phi_1-1| \leq \frac{1}{2}$. Suppose that $|\Phi_1-1|=0$, that is $\Phi_1=1$, we get by the definition of $\Phi_1$ that
\begin{equation}
\label{eq:thm-10}
a\eta_1\alpha_1^n=z_r.
\end{equation}
Since $\alpha_1 \not\in \mathbb{Z}$ then there exists a conjugate $\alpha_j$ of $\alpha_1$ in the field $\mathbb{K}=\mathbb{Q}(\alpha_1,\dots,\alpha_t)$ such that $\alpha_1 \ne \alpha_j$. Again $z_r$ is a rational integer, therefore, on taking the $j$-th conjugate of both sides of \eqref{eq:thm-10},
we may write that $af_j(n)\alpha_j^n=z_r$.
Thus
$$
\left(\frac{\alpha_1}{|\alpha_j|}\right)^n=\left|\frac{f_j(n)}{\eta_1}\right|,
$$
which by Lemma \ref{lem4}(ii) yields that $n \leq c_{14}$.

Finally, we may assume that $0 < |\Phi_1-1| \leq \frac{1}{2}$. It is well known that if $|\Phi_1-1| \leq \frac{1}{2}$ we have $|\log(\Phi_1)| < 2|\Phi_1-1|$, where $\log(\Phi_1)$ denotes the principal value of
the logarithm of the complex number $\Phi_1$. Hence, from \eqref{eq:Phi-1_upper} we conclude that
\begin{equation} \label{eq:lambda upper}
|\Lambda_1| < 2|\Phi_1-1|<2c_{12}n^{k-1}\left\{\max\left(\frac{1}{\alpha_1}, \frac{1}{\alpha_1^{\frac{\e}{1+\e}}},\frac{|\alpha_2|}{\alpha_1}\right)\right\}^{n-m}
\end{equation}
where $\Lambda_1:=\log(\Phi_1)$. Since $z_i \in U_{S}$ $(1 \leq i \leq r)$, we may write $|z_r|=p_1^{e_{r1}} \dots p_{\ell}^{e_{r\ell}}$, where the numbers $e_{rj}$ are nonnegative integers. Further, since $\Phi_1=a\eta_1\alpha_1^{n}z_r^{-1}$, we have
$$
\Lambda_1=(-1)\sum_{i=1}^{\ell}(e_{ri})\log(p_i)+\log(a\eta_1)+n\log(\alpha_1)+b_0\log(-1),
$$
where $b_0$ is an integer with $|b_0| \leq \ell+2$.

We use Lemma \ref{lem:matveev} to obtain a non-trivial lower bound for $|\Lambda_1|$. Since $\Phi_1 \ne 1$ we also have that $\Lambda_1 \ne 0$. Hence, we can apply Lemma \ref{lem:matveev} to $\Lambda_1$ with $\mathbb{K}=\mathbb{Q}(\alpha_1,\dots,\alpha_t), D \leq k^t, m=\ell+3, \psi_j=p_j \ (1 \leq j \leq \ell), \psi_{\ell+1}=a\eta_1, \psi_{\ell+2}=\alpha_1, \psi_{\ell+3}=-1$. By Lemma \ref{ubound}, we may choose $B=c_{15}n$ with an effective constant $c_{15}$. Also, it is clear that for $1 \leq j \leq \ell$ the choice $A_j=\log{p_j}$ is suitable. Further, by Lemma \ref{lem4}(ii) there exists an effective constant $c_{16}$ such that
$$
\max\{A_{\ell+1}, A_{\ell+2}, A_{\ell+3}  \} \leq c_{16}\log n.
$$
Applying Lemma \ref{lem:matveev} with the above parameters we obtain 
\begin{equation}\label{lambda lower}
|\Lambda_1|>\exp(-c_{17}\log{n}).
\end{equation}
Comparing \eqref{lambda lower} with \eqref{eq:lambda upper}, we get that $n-m \leq c_{18}\log n$. 
\end{proof}

\section{Proof of Theorem \ref{th1}}

Let $c_1, c_2, \dots$ be positive numbers which are effectively computable  in terms of $\e, \gamma, r, \ell$, $p_1,\dots, p_\ell, k, a, b$. Without loss of generality we may assume $n\geq m$. In view of  Lemma \ref{ubound} the upper bound for $\max(\log |z_1|, \dots, \log|z_r|)$ is $n$. Therefore, it is enough to derive an effective upper bound only for $n$.  In the case of $n = 0$, we have done immediately. So in the rest of the proof, we will assume that $n>0$.

Let $\alpha_1$ be the dominant root of $(U_n)_{n\geq 0}$ with $\eta_1\neq 0$ in \eqref{eq7}. Firstly, let $f_i(n)=0$ for some $2 \leq i \leq t$. Then Lemma \ref{lem4}(i) gives that $n \leq c_{1}$. Next, assume that 
\[|aU_n+bU_m| \leq \frac{1}{2}|\eta_1|\alpha_1^n\] and $f_i(n) \ne 0$ for $2 \leq i \leq t$. Since by assumption $|aU_n + bU_m| \geq |U_n|$, we get
\[|U_n|\leq |aU_n + bU_m| \leq   \frac{1}{2}|\eta_1|\alpha_1^n.\]
Now, in view of \eqref{eq4}, 
$$
\frac{1}{2}|\eta_1|\alpha_1^n \leq \left|\sum_{i=2}^tf_i(n)\alpha_i^n\right| \leq \sum_{i=2}^t|f_i(n)||\alpha_i^n|.
$$
Since $\alpha_1$ is the dominant root, Lemma \ref{lem4}(ii) gives $n \leq c_{2}$. 
Henceforth, we may assume that
\begin{equation*}
 f_i(n) \ne 0, \ (2 \leq i \leq t) \ \textrm{and} \ |aU_n+bU_m| > \frac{1}{2}|\eta_1|\alpha_1^n.
\end{equation*}
 Now, we may rewrite \eqref{eq8} as 
$$
a\eta_1\alpha_1^{n} + b\eta_1 \alpha_1^m - z_r  = \sum_{i=1}^{r-1}{z_i} - a\sum_{j=2}^{t}f_j(n)\alpha_j^n -b\sum_{j=2}^{t}f_j(m)\alpha_j^m,
$$
and from this we obtain
\[\eta_1\alpha_1^n(a+b\alpha_1^{m-n})z_r^{-1} -1= \frac{\sum_{i=1}^{r-1}{z_i} }{z_r} -  \frac{a \sum_{j=2}^{t}f_j(n)\alpha_j^n}{z_r} -  \frac{b\sum_{j=2}^{t}f_j(m)\alpha_j^m }{z_r}.\]
Hence by substituting $\Phi_2:=\eta_1\alpha_1^n(a+b\alpha_1^{m-n})z_r^{-1}$ in the above equation, we obtain
\begin{align}\label{eq:thm-9a}
\begin{split}
\left| \Phi_2 -1 \right| &\leq \frac{\sum_{i=1}^{r-1}{|z_i|}}{|z_r|}+\frac{a\sum_{j=2}^{t}|f_j(n)||\alpha_j|^n}{|z_r|} + \frac{b\sum_{j=2}^{t}|f_j(m)||\alpha_j|^m}{|z_r|}\\
&\leq \frac{2r\sum_{i=1}^{r-1}{|z_i|}}{|\eta_1|\alpha_1^{n}}+\frac{2ra\sum_{j=2}^{t}|f_j(n)||\alpha_j|^n}{|\eta_1|\alpha_1^{n}} + \frac{2rb\sum_{j=2}^{t}|f_j(m)||\alpha_j|^m}{|\eta_1|\alpha_1^{n}}\\
&\leq c_{3}n^{k-1}\left\{\max\left(\frac{1}{\alpha_1^{\frac{\e}{1+\e}}},\frac{|\alpha_2|}{\alpha_1}
\right)\right\}^{n}.
\end{split}
\end{align}
One can easily see that $n$ is bounded if $|\Phi_2-1|> \frac{1}{2}$. Thus, we may assume that $0\leq |\Phi_2-1|\leq \frac{1}{2}$. If  $|\Phi_2-1| = 0$, then we have
\begin{equation}\label{eq:thm-9b}
\eta_1(a\alpha_1^n+b\alpha_1^{m}) = z_r.
\end{equation}
Since $\alpha_1\notin \mathbb{Z}$, then there exists a conjugate $\alpha_j$ of $\alpha_1$ in the field $\mathbb{Q}(\alpha_1, \ldots, \alpha_t)$ such that $\alpha_1\neq \alpha_j$. By taking $j$-th conjugate of \eqref{eq:thm-9b}, we get
\begin{equation}\label{eq:thm-9c}
\eta_1^{(j)}\cdot \left(a\alpha_j^{n}+ b\al_j^{m}\right) = z_r,
\end{equation}
 (where $\eta_1^{(j)}$ is $j$-th conjugate of $\eta_1$).  As $\al_1>1$ is real and $a, b$ are positive integers, it follows from \eqref{eq:thm-9b} and \eqref{eq:thm-9c} that,
\begin{align*}
|\eta_1|\alpha_1^{n} & < |\eta_1|(a\alpha_1^{n}  + b\alpha_1^{m}) = | \eta_1^{(j)}||a\alpha_j^{n}+ b\al_j^{m}| \leq \max\left\{2A|\eta_1^{(j)}|, 2A|\eta_1^{(j)}||\alpha_j|^m\right\},
\end{align*}
where $A = \max\{a, b\}$. This implies that $n$ is bounded by an effectively computable constant $C$. Hence, we may assume that $0 < |\Phi_2-1| \leq \frac{1}{2}$. From \eqref{eq:thm-9a}, we infer that
\begin{equation}\label{eq:thm-9d}
|\Lambda_4| < 2|\Phi_2-1|=2c_{3}n^{k-1}\left\{\max\left(\frac{1}{\alpha_1^{\frac{\e}{1+\e}}},\frac{|\alpha_2|}{\alpha_1}\right)\right\}^{n}
\end{equation}
where $\Lambda_4:=\log(\Phi_2) = \log |\eta_1\alpha_1^n(a+b\alpha_1^{m-n})z_r^{-1}|$. Since $z_r\in \mathcal{U}_S$, 
$$
\Lambda_4=\sum_{i=1}^{\ell}(-e_{ri})\log(p_i)+\log|\eta_1|+n\log(\alpha_1)+ \log|a+b\alpha_1^{m-n}|+b_0\log(-1),
$$
where $b_0$ is an integer with $|b_0| \leq \ell+3$.

To apply Lemma \ref{lem:matveev}, we set $\mathbb{K}=\mathbb{Q}(\alpha_1,\dots,\alpha_t), D \leq k^t, m=\ell+4, \psi_j=p_j \ (1 \leq j \leq \ell), \psi_{\ell+1}=\eta_1, \psi_{\ell+2}=\alpha_1, \psi_{\ell+3} = a+b\alpha_1^{m-n}, \psi_{\ell+4}=-1$. By Lemma \ref{ubound}, we may choose $B=c_{4}n$ with an effective constant $c_{4}$. Now, it is clear that for $1 \leq j \leq \ell$ the choice $A_j=\log{p_j}$ is suitable. By Lemma \ref{lem4} and Proposition \ref{prop1}, there exists an effective constant $c_{5}$ such that
$$
\max\{A_{\ell+1}, A_{\ell+2}, A_{\ell+3}, A_{\ell+4} \} \leq c_{5}\log n.
$$
Applying Lemma \ref{lem:matveev} with the above parameters, we find
\begin{equation}\label{eq:thm-9e}
|\Lambda_4|>\exp(-c_{6}\log{n}).
\end{equation}
Comparison of \eqref{eq:thm-9d} with \eqref{eq:thm-9e} yields $n \leq c_{7}\log n$. This completes the proof of the Theorem \ref{th1}. \qed

{\bf Data Availability Statements} Data sharing not applicable to this article as no datasets were generated or analysed during the current study.

{\bf Acknowledgment:} The authors sincerely thank the referee for his/her thorough reviews and very helpful comments and suggestions which significantly improves the paper. The first author's work is supported by CSIR fellowship(File no: 09/983(0036)/2019-EMR-I).

\end{document}